\newtheorem{theorem}{Theorem}[section]
\newtheorem{lemma}[theorem]{Lemma}
\theoremstyle{definition}
\newtheorem{corollary}[theorem]{Corollary}
\newcommand{\R}{{\Bbb R}}
\newcommand{\Z}{{\Bbb Z}}
\newcommand{\N}{{\Bbb N}}
\newcommand{\T}{{\Bbb T}}
\newcommand{\Q}{{\Bbb Q}}
\theoremstyle{remark}
\newtheorem{remark}[theorem]{Remark}
\numberwithin{equation}{section}
\newcommand\mL{L\kern-0.08cm\char39}
\begin{document}


\title[Minimal non-invertible 
maps on 2-manifolds]{Construction of minimal non-invertible skew-product maps on 2-manifolds}

\author{Jakub \v Sotola}
\address{Mathematical Institute, Silesian University in Opava, Na Rybn\'i\v cku 1, 746 01, Opava, Czech Republic}
\email{Jakub.Sotola@math.slu.cz}

\author{Sergei Trofimchuk}
\address{Mathematical Institute, Silesian University in Opava, Na Rybn\'i\v cku 1, 746 01, Opava, Czech Republic}
\curraddr{Instituto de Matem\'atica y Fisica, Universidad de Talca, Casilla 747,
Talca, Chile}
\email{trofimch@inst-mat.utalca.cl}

\subjclass[2010]{Primary 54H20; Secondary 37B05}
\keywords{}

\date{July 17, 2014} 


\commby{}

\begin{abstract}   Applying the Hric-J\"ager blow up technique, we give an affirmative answer to the question about the existence of  non-invertible minimal circle-fibered self-maps of the Klein bottle. In addition, we present a simpler construction of a  non-invertible minimal  self-map of  two dimensional torus.  \end{abstract}

\maketitle

\section{Introduction}\label{intro} 
This paper deals with  the minimal  circle-fibered self-maps of two dimensional manifolds.  We recall that 
the pair $(X,f)$ consisting of a compact metric space $X$ and its continuous endomorphism  $f:X \to X$ is called a minimal 
dynamical system if $X$ does not have any non-empty compact subset $X'\not=X$ satisfying $f(X') \subseteq X'$. 
The understanding of the structure of minimal systems has a clear importance for the discrete dynamics. 
During the last decades,  much progress has been made in studying minimal  subsystems of $(M,f)$ in the case when $M$ is a low dimensional compact connected manifold, e.g. see [1--14].

In particular, Auslander and  Katznelson have proved \cite{AK} that the minimality of  $(M,f)$ together with dim $M =1$ implies that $M=S^1$ and that $f$ is conjugate to an irrational rotation (hence, $f$ is a homeomorphism).   
If dim~$M =2$ then, 
due to the  Blokh-Oversteegen-Tymchatyn theorem \cite{B,KST08}, the minimal manifold $M$ must be either the 2-torus $\mathbb{T}^2$  or the Klein bottle $\mathbb{K}^2$.   It was  also shown in \cite{KST} that, in contrast with the minimal system $(S^1,f)$,  there exist minimal fiber-preserving  systems $(\T^2,f)$ which are not invertible.  The key dynamical and topological components of the proof in \cite{KST} are, respectively,  the Rees example \cite{MR}  of a non-distal but point-distal torus homeomorphism and  the Roberts-Steenrod characterisation \cite{RS} of the  monotone transformations of 2-dimensional manifolds.

Since the available constructions \cite{Ell,Parry} 
 of the minimal homemorphisms  of  the Klein bottle are technically  quite involved,  the similar question about the existence of minimal non-invertible self-maps of $\mathbb{K}^2$  has been left open in \cite{B,KST,KST08}. 
In fact, more complicated topology of the Klein bottle (a skew product of two circles) in comparison  to the torus $\mathbb{T}^2$  (a direct product of two circles) could potentially be an obstacle for the existence of minimal non-invertible self-maps of $\mathbb{K}^2$, cf. 
\cite[Theorem C-11 and Corollary 1]{KST13} and \cite{DM,GW}.  Nevertheless, the main result of this paper shows that
\begin{theorem} \label{main} There exists a fiber-preserving transformation $\widetilde{S}$ of the Klein bottle, which is minimal and non-invertible.\end{theorem}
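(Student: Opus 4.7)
The plan is to start from a minimal fiber-preserving homeomorphism $S$ of $\mathbb{K}^2$ (which exists by the constructions of Ellis or Parry cited in the paper) and then apply a suitably adapted Hric--J\"ager blow-up to a carefully chosen fiber. Writing $\mathbb{K}^2$ as the quotient $(S^1 \times S^1) / \sim$ with the bundle projection $\pi : \mathbb{K}^2 \to S^1$ onto the base circle, the homeomorphism $S$ descends to an irrational rotation $R_\alpha$ on the base (this follows from Auslander--Katznelson applied to the factor). The blow-up will take place on a single fiber $\pi^{-1}(\theta_0)$, and will be equivariant enough under $S$ to produce a new fiber-preserving system on a surface $\widetilde{\mathbb{K}^2}$ that can be re-identified with $\mathbb{K}^2$.

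The key steps, in order, are as follows. First, I would replace the fiber $C_0 = \pi^{-1}(\theta_0)$ by a copy of $S^1$ together with a countable family of arcs attached at a dense sequence of points (corresponding to the forward orbit of a distinguished base point under $R_\alpha$), and similarly replace every iterate $S^n(C_0)$ by its blown-up version, using a summable schedule of arc-lengths so that the resulting metric surface $\widetilde{\mathbb{K}^2}$ is compact. By the Roberts--Steenrod characterization of monotone maps of 2-manifolds (the same tool used in \cite{KST} for the torus), $\widetilde{\mathbb{K}^2}$ is homeomorphic to $\mathbb{K}^2$ via a monotone map $p : \widetilde{\mathbb{K}^2} \to \mathbb{K}^2$ that collapses each inserted arc to a point and is the identity elsewhere. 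Second, I would lift $S$ to a continuous map $\widetilde{S} : \widetilde{\mathbb{K}^2} \to \widetilde{\mathbb{K}^2}$ that agrees with $S$ outside the blown-up fibers and that maps each inserted arc to the \emph{next} inserted arc not by a homeomorphism, but by a map that crushes a nontrivial subarc to a single point. This is precisely where non-invertibility is introduced. Third, the semiconjugacy $p \circ \widetilde{S} = S \circ p$ immediately forces $\widetilde{S}$ to be fiber-preserving (via the composition with $\pi$) and allows minimality of $\widetilde{S}$ to be deduced from minimality of $S$ combined with the fact that every inserted arc has a dense forward orbit whose accumulation sweeps out the whole surface.

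The main technical obstacle I foresee is ensuring that the combined blow-up and crushing operations produce a \emph{continuous} surjective self-map of $\widetilde{\mathbb{K}^2}$ \emph{while keeping minimality}. Concretely, the arc-length sequence $(\ell_n)$ attached to the orbit $(S^n(C_0))_{n \in \Z}$ must be chosen so that $\widetilde{S}$ is continuous (which forces $\ell_{n+1} \le \ell_n$ in an appropriate sense on the expanding side and, symmetrically, a controlled ratio for negative iterates), and so that the portions of arcs collapsed by $\widetilde{S}$ nevertheless leave enough non-degenerate images to prevent any proper closed invariant set from forming. Balancing these two requirements is the heart of the Hric--J\"ager method and must be adapted here to respect the orientation-reversing gluing of the Klein bottle; in particular, the fact that going once around the base circle flips the fiber means that the bookkeeping on iterates $S^n(C_0)$ has to track the orientation and prescribe the collapses compatibly under the reflection.

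Finally, once continuity, surjectivity, fiber preservation, non-invertibility, and minimality of $\widetilde{S}$ on $\widetilde{\mathbb{K}^2}$ are verified, conjugating by the Roberts--Steenrod homeomorphism $\widetilde{\mathbb{K}^2} \cong \mathbb{K}^2$ yields the desired self-map of the Klein bottle, completing the proof of Theorem~\ref{main}.
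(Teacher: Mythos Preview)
Your outline is plausibly workable but follows a route quite different from the paper's. The paper does \emph{not} work directly on $\mathbb{K}^2$ and does \emph{not} invoke Roberts--Steenrod; one of its explicit selling points is avoiding that theory. Instead it works on the double cover $\mathbb{T}^2$, starts from a Parry skew-product $S(x,y)=(x+\alpha,\,y+r(x))$ chosen to commute with the deck transformation $P(x,y)=(x+\tfrac12,\,1-y)$, and implements the Hric--J\"ager blow-up \emph{analytically}: it builds explicit fiber probability measures $\mu_x$ (with atoms along the backward $S$-orbit of a $P$-symmetric pair of points), defines $T(x,y)=(x,\tau_x(y))$ via the generalized inverse of $y\mapsto\mu_x[0,y]$, sets $\widehat S=T^{-1}\circ S\circ T$ on a dense $T$-invertible set, and then proves by hand that $\widehat S$ extends continuously to all of $\mathbb{T}^2$, is minimal, non-invertible, and $P$-equivariant, so it descends to $\mathbb{K}^2$. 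Your approach---blow up an orbit topologically, use Roberts--Steenrod to recognize the result as $\mathbb{K}^2$, and crush subarcs---is essentially the alternative the paper sketches only in its closing Remark (there phrased as: first build a homeomorphism with an asymptotic pair, then collapse). What the paper's route buys is an explicit, self-contained construction with all estimates visible; what yours buys is a more conceptual picture at the cost of relying on the Roberts--Steenrod machinery and the rather involved Ellis/Parry minimal homeomorphism of $\mathbb{K}^2$ as a black box.

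There is also a genuine muddle in your first step that you should fix before proceeding. You write that you will replace the single fiber $C_0=\pi^{-1}(\theta_0)$ by ``$S^1$ together with a countable family of arcs attached at a dense sequence of points corresponding to the forward orbit of a distinguished base point under $R_\alpha$.'' But that orbit lives in the \emph{base} circle, not in the fiber $C_0$; what actually gets blown up is the orbit $\{S^n(z_0):n\in\mathbb{Z}\}$ of a single \emph{point} $z_0\in\mathbb{K}^2$, inserting one arc in each of the countably many fibers over $R_\alpha^n(\theta_0)$. With only forward iterates you cannot get a continuous $\widetilde S$ (the pre-image of the first inserted arc has nowhere to come from), so the blow-up must be two-sided, and---as you partially note---the length schedule $(\ell_n)_{n\in\mathbb{Z}}$ must be summable with $\ell_{n+1}\le\ell_n$ on the side where you crush. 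Finally, the Klein-bottle compatibility is most cleanly handled not by ``tracking orientation along the base'' but by doing exactly what the paper does: work $P$-equivariantly on $\mathbb{T}^2$ (blow up a $P$-invariant pair of orbits) and pass to the quotient at the end.
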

Theorem \ref{main} is proved in the next section of our work. The proof uses the Hric-J\"ager blow up technique proposed  recently in \cite{HrJ}. In the cited work, the authors also sketched  a new construction of a non-distal but point-distal torus homeomorphism.  We develop further their construction and adopt it to a more complicated topological situation. As a by-product, even without the use of 
the Roberts-Steenrod theory of monotone transformations of  2-dimensional manifolds, we are able to  present a relatively short and explicit analytic construction of a fiber-preserving minimal non-invertible self-map of  the two-torus, cf. \cite{KST}.

\section{Proof of the main theorem} \label{AF}
Set $\mathbb{T}^1:=\mathbb{R}/{\mathbb{Z}}$ then $\mathbb{T}^2= \mathbb{T}^1\times\mathbb{T}^1$.  We will fix the positive orientation of $\T^1$ induced by  the usual order on $[0,1)$.
Each ordered pair $(x,y)$ of points in $\T^1$ defines two closed subarcs of $\T^1$ whose endpoints are $x$ and 
$y$.   The arc obtained by moving a point from $x$ to $y$ in the positive direction will  be denoted by 
 $[x,y]  \subset \T^1$.  Hence, $[x,y] \cup [y,x] = \T^1$  so that  $0.5 \in [0.25,0.75]$ and $0\in [0.75,0.25]$. By slightly abusing the  notation, we also will write $[0,1] = \T^1, \ [0,0.5]= [1,0.5]$ and $[0,0] = \{0\}$. 

Next, consider the  homeomorphism $P:\T^2 \to \T^2$ defined by $P(x,y)=(x+1/2, $ $1-y).$
Let $\sim$ be an equivalence relation on $\T^2$ in which each point $(x,y)$ is identified with all its images: $P^0(x,y)= (x,y)$ and $P(x,y)$.  Let $\pi: \T^2\to \mathbb{K}^2$ denote the corresponding quotient map.
The quotient space $\mathbb{K}^2$ is one of standard models of the Klein bottle. Notice that a transformation $Q$ of the torus induces a transformation of the Klein bottle by the quotient map $\pi$ if and only if $Q$ commutes with $P$.

The desired minimal map $\widetilde{S}: \mathbb{K}^2 \to \mathbb{K}^2$ will be constructed as a factor of a minimal and non-invertible transformation $\widehat{S}$ of the torus. On the other hand,  the map $\widehat{S}$ will be constructed as a topological extension of the Parry minimal homeomorphism $S: \T^2 \to \T^2$ of the form 
$S(x,y)=(R(x),\sigma_x(y)):= (x+\alpha, y +r(x))$. 
Here $R(x)$ is a rotation by an irrational angle $\alpha$ and  continuous function $r: \T^1 \to \R$ is 
such that $r(x)=-r(x+1/2)$ (i.e. $S$ commutes with $P$).  
Moreover,  the Fourier coefficients of $r(x)$ must  satisfy several assumptions listed in \cite{Parry};  in addition, we can choose them in such a way that $r(0)=r(1/2) =0,$  and $r(x) \in (0, 1/4)$  for all $x\in(0,1/2)$. In the sequel, we will use 
the notation  $\sigma_x^n$ for the composition  $\sigma_{R^{n-1}(x)}\circ\sigma_{R^{n-2}(x)}\circ\ldots\circ\sigma_{R(x)}\circ\sigma_x.$

Take now some point $x_1^* \in (0.1,0.2)\cap \Q$, set $x_2^* = x_1^*+1/2$ and then choose the points $z_1^\ast=(x_1^\ast,y_1^\ast)$ and $z_2^\ast=Pz_1^*= (x_2^\ast,y_2^\ast)$ in such a way that $y_j^\ast \not= \sigma^{-m}_{R^m(x_j^\ast)}(0),   \sigma^{-m}_{R^m(x_j^\ast)}(-r(R^m(x_j^\ast))))$ 
for each $m \in \Z, \ j =1,2$. Then  the $S$-orbits of $z_1^*, z_2^*$ do not intersect curves $\mathbb{T}^1\times\{0\}$ and $\{(x,-r(x))| x\in\mathbb{T}^1\}$.

Let continuous  $\psi, \phi: \mathbb{T}^1\rightarrow\mathbb{T}^1$ have their graphs $P$-invariant  and intersecting transversally at the points $z_1^\ast$ and $z_2^\ast$, see Figure 1. We will choose $\phi, \psi$ in such a way that they take  zero values  (recall that $1$ is identified with $0$) in all points except for some neighbourhood $(\bar x_1,\bar x_2)$ of $x_1^\ast$ and $(\bar x_1+1/2,\bar x_2+1/2)$ of $x_2^\ast$, respectively. 
\begin{figure}[h]
\centering \fbox{\includegraphics[width=5cm]
{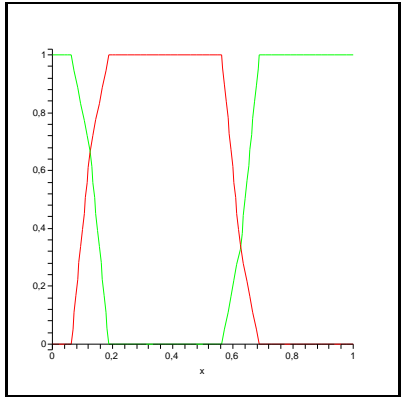}} 
\caption{$P$-invariant graphs of $\phi$ (green) and $\psi$ (red).} \label{FF1}
\end{figure}

 We define a fiber measure $\mu_x^0$ on  the $\sigma$-algebra $\mathcal B$ of Borelian subsets of $\mathbb{T}^1$ as 
$$\mu_x^0:=\left\{\begin{array}{cc} \delta_{y_j^*},&\mbox{if} \ x=x_j^*, \ j =1,2,\\  \frac{\lambda|_{[\psi(x),\phi(x)]}}{\phi(x)-\psi(x)}, & \mbox{if} \  \phi(x)>\psi(x),\\ \frac{\lambda|_{[\phi(x),\psi(x)]}}{\psi(x)-\phi(x)}, & \mbox{if} \  \psi(x)>\phi(x),\end{array}\right.$$
where $\delta_y$ denotes a probabilistic Dirac 
measure concentrated at $y$ (i.e. $\delta_y(y)=1$) and $\lambda|_{[a,b]}(A)$ denotes the Lebesgue measure of intersection of a measured set $A$ and the arc $[a, b]$. 
Consider also the measures $\mu_x^n, \ n \in \Z,$ and $\mu_x$  on $\mathcal B$ defined by
$$\mu_x^n:=\mu_{R^{n}(x)}^0\circ\sigma_x^n, \quad 
\mu_x:=(\lambda+\sum_{n=0}^\infty 2^{-n-1}\mu_x^{n})/{2}.$$
It is clear that $\mu_x(\T^1) =1,$  $x \in \T^1$.  In addition, $\mu$ inherits the symmetry  properties  of $r, \phi, \psi$:
\begin{lemma}
For all $x \in \mathbb{T}^1$ and  $y \in [0,1]$, we have
\begin{equation}\label{MMM}\mu_x[0,y]=1-\mu_{x+\frac12}[0,1-y]. \end{equation}
\end{lemma}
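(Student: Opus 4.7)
The plan is to establish the stronger fact that the reflection $\iota\colon \T^1 \to \T^1$, $\iota(y) := 1-y$, pushes the measure $\mu_x$ forward to $\mu_{x+1/2}$ for every $x$. From this the lemma is a two-line computation:
$$\mu_{x+1/2}[0, 1-y] = \mu_x\bigl(\iota([0,1-y])\bigr) = \mu_x[y, 0] = 1 - \mu_x[0, y],$$
the last equality coming from $[0, y] \cup [y, 0] = \T^1$ together with the hypothesis $y_j^\ast \neq \sigma^{-m}_{R^m(x_j^\ast)}(0)$, $m \in \Z$, $j=1,2$, which forbids any atom of $\mu_x$ at the common endpoint $0$.

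The algebraic backbone is the commutation $S \circ P = P \circ S$, equivalently $r(x+1/2) = -r(x)$. On fibers this becomes the intertwining $\iota \circ \sigma_x = \sigma_{x+1/2} \circ \iota$, which iterates to $\sigma_{x+1/2}^n = \iota \circ \sigma_x^n \circ \iota$ for all $n \geq 0$.

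To verify $\iota_* \mu_x = \mu_{x+1/2}$ I would first check it for $\mu^0$. At the special fibers $x = x_j^\ast$, the relation $z_2^\ast = Pz_1^\ast$ gives $y_2^\ast = 1 - y_1^\ast$, hence $\iota_* \delta_{y_1^\ast} = \delta_{y_2^\ast}$. At a generic $x$, the $P$-invariance of the graphs of $\phi,\psi$ gives $\phi(x+1/2) = 1 - \phi(x)$ and $\psi(x+1/2) = 1 - \psi(x)$; the reflection $\iota$ sends the oriented arc $[\psi(x), \phi(x)]$ to $[\phi(x+1/2), \psi(x+1/2)]$, and since $\iota$ preserves Lebesgue measure the normalized uniform density is preserved, matching the piecewise formula for $\mu_{x+1/2}^0$. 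The propagation to $n \geq 1$ is then a direct computation using the intertwining:
$$\mu_{x+1/2}^n = \mu^0_{R^n(x+1/2)} \circ \sigma_{x+1/2}^n = \bigl(\iota_* \mu^0_{R^n(x)}\bigr) \circ (\iota \circ \sigma_x^n \circ \iota) = \iota_*\bigl(\mu^0_{R^n(x)} \circ \sigma_x^n\bigr) = \iota_* \mu_x^n.$$
Since $\iota_* \lambda = \lambda$, linearity of the defining formula $\mu_x = \tfrac{1}{2}\bigl(\lambda + \sum_{n \geq 0} 2^{-n-1}\mu_x^n\bigr)$ delivers $\iota_* \mu_x = \mu_{x+1/2}$.

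The main obstacle I expect is the careful case analysis in the base-case identity $\iota_* \mu_x^0 = \mu_{x+1/2}^0$, since $\mu_x^0$ is defined piecewise (Dirac masses at $x = x_j^\ast$, plus two branches according to the sign of $\phi(x) - \psi(x)$), and $\iota$ swaps the two Lebesgue branches; one must check that this swap is compatible with the analogous sign condition at $x + 1/2$, and separately treat degenerate values of $x$ where $\phi(x) = \psi(x)$ outside the special fibers.
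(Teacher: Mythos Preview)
Your approach is correct and is essentially the paper's own argument: the paper also establishes the base case for $\mu^0$ by the same case analysis (its arc identity $\mu_x^0[a,b]=1-\mu_{x+1/2}^0[1-a,1-b]$ is your $\iota_*\mu_x^0=\mu_{x+1/2}^0$ in complemented form) and then propagates through the sum using $\varrho_n(x+\tfrac12)=-\varrho_n(x)$, which is exactly your intertwining $\sigma_{x+1/2}^n=\iota\circ\sigma_x^n\circ\iota$ written in coordinates; your pushforward language is tidier but the content is the same. One small remark: in your final step $\mu_x[y,0]=1-\mu_x[0,y]$ the closed arcs meet at both $0$ and $y$, so you also need $\mu_x\{y\}=0$, which fails on the countable backward $S$-orbit of the $z_j^\ast$; the paper's own proof has the identical boundary slip (its ``trivial'' verification on the Dirac fibers ignores the endpoint case), and nothing downstream depends on these exceptional points.
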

\begin{proof}
First we prove that,  for all $x, y \in \mathbb{T}^1$, it holds 
\begin{equation}\label{MM}\mu_x^0[0,y]=1-\mu_{x+\frac12}^0[0,1-y]. \end{equation}
For $x \not\in E:= [(\bar x_1,\bar x_2) \cup (\bar x_1+1/2,\bar x_2+1/2)]\setminus \{x_1^*, x_2^\ast\}$ 
(i.e. for the fibers with the usual Lebesgue or Dirac measures), this relation is immediate. 
Now, let $x\in E$ be such that $\phi(x)<\psi(x)$ and hence $\psi(x+1/2)=1-\psi(x)<\phi(x+1/2)=1-\phi(x)$. Then
\begin{multline}\nonumber \mu_{x+\frac12}^0[0,1-y]=\left\{\begin{array}{cl}0,& \mbox{if} \ 1-y<\psi\left(x+\frac12\right),\\ \frac{1-y-\psi\left(x+\frac12\right)}{\phi\left(x+\frac12\right)-\psi\left(x+\frac12\right)},& \mbox{if} \  \psi\left(x+\frac12\right)\leq1-y\leq\phi\left(x+\frac12\right),\\1,& \mbox{if} \  1-y>\phi\left(x+\frac12\right);\end{array}\right\}\\=\left\{\begin{array}{cl}0, & \mbox{if} \   y>\psi(x),\\ \frac{\psi(x)-y}{\psi(x)-\phi(x)}, & \mbox{if} \  \phi(x)\leq y\leq \psi(x),\\ 1,& \mbox{if} \ y<\phi(x);\end{array}\right\}= \quad 1-\mu^0_x[0,y].  \end{multline}
Similarly,  (\ref{MM}) holds when $\phi(x)>\psi(x)$. 
Next, we claim that 
\begin{equation} \label{1-a}\mu_x^0[a,b]=1-\mu_{x+\frac12}^0[1-a,1-b], \ \mbox{for} \ a,b \in [0,1],\ a <b,  \ [a,b] \in \T^1. \end{equation}
This is trivial for the fibers over $x_1^\ast$ and $x_2^\ast$. For the rest of the fibers,  (\ref{1-a})  follows from (\ref{MM}) in view of 
$$\mu_x^0[a,b]=\left\{\begin{array}{cl}\mu_x^0[0,b]-\mu_x^0[0,a],& \mbox{if} \ 0\leq a\leq b\leq 1;\\ 1-\mu_x^0[0,a]+\mu_x^0[0,b], & \mbox{if} \ 0\leq b< a\leq 1.\end{array}\right.$$
Finally, we will show that the superscript $0$ in (\ref{MM}) can be omitted. Indeed, set $\varrho_n(x):=r(x)+r(x+\alpha)+r(x+2\alpha)+\ldots+r(x+(n-1)\alpha)$. Then 
\begin{multline}\nonumber \mu_{x+\frac12}[0,1-y]=\sum_{n \geq 0} 2^{-n-2}\mu_{R^{n}(x+\frac12)}^0\sigma_{x+\frac12}^{n}[0,1-y]+(1-y)/2=\\
\sum_{n \geq 0} 2^{-n-2}\mu^0_{x+\frac12+n\alpha}\left[\varrho_n\left(x+\frac12\right),1-y+\varrho_n\left(x+\frac12\right)\right]+(1-y)/2=\\
\sum_{n \geq 0} 2^{-n-2}\mu_{x+n\alpha+\frac12}^0[-\varrho_n(x),1-y-\varrho_n(x)]+1/2-y/2=\\
\sum_{n \geq 0} 2^{-n-2}(1-\mu_{x+n\alpha}^0[1+\varrho_n(x),y+\varrho_n(x)])+1/2-y/2=\end{multline}
\hspace{1cm}$
\sum_{n \geq 0} 2^{-n-2}+1/2-\left(\sum_{n \geq 0} 2^{-n-2}\mu_{R^{n}(x)}^0\sigma_{x}^{n}[1,y]+y/2\right)
=1-\mu_x[0,y].$
\end{proof}
The next two results show that  $\mu_x^n[0,y]$ and $\mu_x[0,y]$ have discontinuities only at the points from backward orbits of $(x_1^\ast, y_1^*)$ and $(x_2^\ast,y_2^*)$:
\begin{lemma} \label{22} Let $\{x_j\}, \{y_j\}, \{z_j\}$ be sequences in $\mathbb{T}^1$ which converge to $x_0$ and $y_0, z_0$, respectively. Then
$$\limsup_{j\rightarrow\infty}\mu_{x_j}^0[y_j,z_j]\leq\mu_{x_0}^0[y_0,z_0].$$
Moreover, if $(x_0,y_0) \not= (x_k^*,y_k^*), k =1,2$, then 
$$\liminf_{j\rightarrow\infty}\mu_{x_j}^0[y_j,z_j]\geq\mu_{x_0}^0[y_0,z_0).$$
\end{lemma}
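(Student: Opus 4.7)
The plan is to split into cases based on whether $x_0 \in \{x_1^*, x_2^*\}$. The dependence $x \mapsto \mu_x^0$ is continuous in a strong sense away from the two singular fibers, while at $x = x_k^*$ the graphs of $\phi$ and $\psi$ cross transversally, so $\phi(x), \psi(x) \to y_k^*$ and the support of $\mu_x^0$ shrinks to a point around which all mass concentrates. This is what drives upper semicontinuity in all cases and lower semicontinuity everywhere except at the atoms $(x_k^*, y_k^*)$.

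\textbf{Regular case.} Suppose first $x_0 \notin \{x_1^*, x_2^*\}$. If $x_0$ lies outside the neighbourhoods $(\bar x_1, \bar x_2)$ and $(\bar x_1 + 1/2, \bar x_2 + 1/2)$, then so do $x_j$ for large $j$, and $\mu_{x_j}^0$ and $\mu_{x_0}^0$ all coincide with the normalized Lebesgue measure on $\T^1$. Otherwise $x_0$ lies in a neighbourhood but $\phi(x_0) \ne \psi(x_0)$, and continuity of $\phi, \psi$ makes the endpoints of the supporting arc of $\mu_{x_j}^0$ vary continuously. In both subcases, $\mu_x^0[y, z]$ admits an explicit formula as a ratio of arc-lengths which is jointly continuous in $(x, y, z)$ at $(x_0, y_0, z_0)$, so $\mu_{x_j}^0[y_j, z_j] \to \mu_{x_0}^0[y_0, z_0]$. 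Non-atomicity of $\mu_{x_0}^0$ gives $\mu_{x_0}^0[y_0, z_0) = \mu_{x_0}^0[y_0, z_0]$, and both inequalities follow at once.

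\textbf{Singular case.} Let $x_0 = x_k^*$, so $\mu_{x_0}^0 = \delta_{y_k^*}$. For large $j$ with $x_j \ne x_k^*$, $\mathrm{supp}(\mu_{x_j}^0)$ is an arc with endpoints $\phi(x_j), \psi(x_j)$ that both tend to $y_k^*$, hence shrinks to $\{y_k^*\}$. For the $\limsup$ bound: if $y_k^* \in [y_0, z_0]$, the right-hand side equals $1$ and the bound is trivial; otherwise $y_k^*$ lies strictly in the complementary arc, the closed arc $[y_j, z_j]$ eventually avoids a fixed neighbourhood of $y_k^*$, and so $\mu_{x_j}^0[y_j, z_j] = 0$ eventually. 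For the $\liminf$ bound, assuming $y_0 \ne y_k^*$: $\mu_{x_0}^0[y_0, z_0) = 1$ exactly when $y_k^* \in (y_0, z_0)$, in which case the shrinking support is eventually contained in $(y_j, z_j) \subset [y_j, z_j]$ and $\mu_{x_j}^0[y_j, z_j] = 1$; otherwise $\mu_{x_0}^0[y_0, z_0) = 0$ and there is nothing to prove.

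The exclusion $(x_0, y_0) \ne (x_k^*, y_k^*)$ in the liminf statement is essential: when $y_0 = y_k^*$ is the left endpoint of $[y_j, z_j]$, the support of $\mu_{x_j}^0$ is an arc shrinking to $y_k^*$ whose two endpoints are transversally separated by $y_k^*$, so a macroscopic fraction of it can lie on the side of $y_j$ that is \emph{outside} $[y_j, z_j]$, producing $\mu_{x_j}^0[y_j, z_j]$ with a strictly smaller limit than $\mu_{x_0}^0[y_0, z_0) = 1$. The main technical obstacle is the bookkeeping around the arc conventions --- half-open versus closed arcs and the wrap-around degeneracy when $y_0 = z_0$. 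Once these are pinned down using the convention that $[y, z]$ is the arc traced from $y$ to $z$ in the positive direction, with $[y, y] = \{y\}$, each case reduces to elementary geometry on $\T^1$.
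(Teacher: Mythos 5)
Your proof follows the same route as the paper's, which disposes of the lemma in one line: "Since the functions $\phi,\psi$ are continuous, this result is a straightforward consequence of the definition of $\mu_x$. In fact, for each $x_0\neq x_k^*$, we have that $\mu_{x_j}^0[y_j,z_j]\to\mu_{x_0}^0[y_0,z_0]$." You are fleshing out exactly this dichotomy --- joint continuity of the explicit density away from $x_1^*,x_2^*$, plus a shrinking-support argument at the Dirac fibers --- and your explanation of why $(x_0,y_0)\neq(x_k^*,y_k^*)$ is needed in the $\liminf$ inequality is correct and more informative than anything the paper says.

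One step is genuinely wrong as stated. In the regular case you write: "If $x_0$ lies outside the neighbourhoods $(\bar x_1,\bar x_2)$ and $(\bar x_1+1/2,\bar x_2+1/2)$, then so do $x_j$ for large $j$." That fails when $x_0$ is a boundary point such as $\bar x_1$ (which is \emph{not} in the open interval, yet is a limit of interior points). For $x_j\to\bar x_1$ from inside the bump region, $\mu_{x_j}^0$ is supported on the arc between $\psi(x_j)$ and $\phi(x_j)$, both of which tend to $0$, while $\mu_{\bar x_1}^0=\lambda$. For the asserted weak convergence $\mu_{x_j}^0\to\lambda$ to hold (as Lemma 2.2 requires), the construction must have $\phi$ and $\psi$ approach $0$ from \emph{opposite} sides of $0$ in $\T^1$, so that the supporting arc expands to $\T^1$ rather than shrinking to $\{0\}$. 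This is presumably what Figure 1 depicts, but you cannot dodge the boundary fibers by pretending $x_j$ lands outside the neighbourhoods; you need to invoke this feature of $\phi,\psi$ explicitly. (The paper's own one-line proof also glosses over this, but your more detailed write-up positively asserts something false there, so it should be corrected.) Similarly, the degenerate case $y_0=z_0$ that you acknowledge as "bookkeeping" is not merely cosmetic: the arc $[y_j,z_j]$ need not converge to $[y_0,z_0]$ when $y_j,z_j$ approach $y_0=z_0$ from the two sides, and the inequality can only be salvaged under the reading of arc convergence that the paper tacitly uses in Corollary 2.3 (arcs of the form $[\sigma^n_x(0),\sigma^n_x(y)]$ whose length is $y\in[0,1]$). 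Pinning that convention down would make your argument airtight.
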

\begin{proof} Since the functions $\phi, \psi$ are continuous, this result is a straightforward consequence of the definition of  $\mu_x$. In fact, for each $x_0 \not=x_k^*$, we have  that
$\mu_{x_j}^0[y_j,z_j]\to\mu_{x_0}^0[y_0,z_0]$ as $j \to +\infty$. 
\end{proof}
\begin{corollary}\label{pi3} Let $\{x_j\}$ and $\{y_j\}$  converge to $x_0$ and $y_0$, respectively. Then
\begin{equation}\label{pi}
\mu_{x_0}^n[0,y_0) \leq \liminf_{j\rightarrow\infty}\mu_{x_j}^n[0,y_j]\leq \limsup_{j\rightarrow\infty}\mu_{x_j}^n[0,y_j]\leq\mu_{x_0}^n[0,y_0], \quad n \in \N.
\end{equation}
Furthermore,
$$\mu_{x_0}[0,y_0) \leq \liminf_{j\rightarrow\infty}\mu_{x_j}[0,y_j]\leq \limsup_{j\rightarrow\infty}\mu_{x_j}[0,y_j]\leq\mu_{x_0}[0,y_0].$$
\end{corollary}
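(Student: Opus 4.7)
The plan is to reduce the first display to Lemma \ref{22} via the cocycle structure of $\sigma_x^n$, and then transfer the conclusion to $\mu_x$ by controlling the defining series term by term.

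Note that $\sigma_x^n$ is a translation of $\T^1$, its shift $\varrho_n(x) = \sum_{k=0}^{n-1} r(R^k x)$ being continuous in $x$. Consequently
$$\mu_{x_j}^n[0, y_j] = \mu_{R^n(x_j)}^0\bigl[\sigma_{x_j}^n(0),\, \sigma_{x_j}^n(y_j)\bigr],$$
and the three sequences $R^n(x_j)$, $\sigma_{x_j}^n(0)$, $\sigma_{x_j}^n(y_j)$ converge to their $x_0$-counterparts. Lemma \ref{22} immediately yields $\limsup_j \mu_{x_j}^n[0, y_j] \leq \mu_{x_0}^n[0, y_0]$. For the matching liminf I must verify the extra hypothesis of Lemma \ref{22}, namely that the limit left endpoint $\bigl(R^n(x_0), \sigma_{x_0}^n(0)\bigr)$ differs from each $z_k^\ast$. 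If equality held for some $k$, then $S^n(x_0, 0) = z_k^\ast$, placing the point $(x_0, 0) \in \T^1 \times \{0\}$ on the $S$-orbit of $z_k^\ast$; this contradicts the choice of $z_k^\ast$ whose $S$-orbit was arranged to avoid $\T^1 \times \{0\}$. Lemma \ref{22} therefore supplies the lower bound $\liminf_j \mu_{x_j}^n[0, y_j] \geq \mu_{x_0}^n[0, y_0)$ as well, completing the first display.

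For the statement about $\mu_x$, insert the defining series to write
$$\mu_{x_j}[0, y_j] = \frac{y_j}{2} + \sum_{n \geq 0} 2^{-n-2}\, \mu_{x_j}^n[0, y_j].$$
Each summand lies in $[0, 2^{-n-2}]$ and $\sum_n 2^{-n-2} < \infty$, so dominated convergence for series (reverse Fatou from above, ordinary Fatou from below) justifies interchanging $\limsup$ and $\liminf$ with the sum. Combined with $y_j/2 \to y_0/2 = \lambda[0, y_0]/2 = \lambda[0, y_0)/2$ and the termwise bounds just obtained, this yields the desired two-sided estimate. The only genuinely delicate point in the whole argument is the observation that $\bigl(R^n(x_0), \sigma_{x_0}^n(0)\bigr) \neq z_k^\ast$; this is precisely the property the construction of $z_k^\ast$ was engineered to guarantee, so the argument closes cleanly.
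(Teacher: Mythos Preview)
Your proof is correct and follows essentially the same route as the paper's own argument: reduce $\mu_x^n$ to $\mu_{R^n(x)}^0$ via the cocycle $\sigma_x^n$, invoke Lemma \ref{22} together with the fact that $(R^n(x_0),\sigma_{x_0}^n(0))$ avoids the marked points $z_k^\ast$, and then pass the estimates through the defining series for $\mu_x$. The only difference is cosmetic: you name the interchange step as dominated convergence/Fatou for series, whereas the paper simply writes out the two resulting inequalities directly.
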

\begin{proof} Since $\sigma, \ R$ are continuous functions and,  by our assumption, $(R^n(x), \sigma^n_x(0))\not=(x_k^*,y_k^*), k =1,2,$ for each $x \in \mathbb{T}^1, \ n \in \N$,  we have that $ \mu_{x_0}^n[0,y_0)=$
$$
= \mu_{R^n(x_0)}^0[\sigma^n_{x_0}(0),\sigma^n_{x_0}(y_0))  \leq \liminf_{j\rightarrow\infty}\mu_{R^n(x_j)}^0[\sigma^n_{x_j}(0),\sigma^n_{x_j}(y_j)]  \leq \liminf_{j\rightarrow\infty}\mu_{x_j}^n[0,y_j].$$  The proof of the second inequality in (\ref{pi}) is similar.  Finally,  by a direct calculation we get the following: $$\limsup_{j\rightarrow\infty}\left(\sum_{n\geq0}2^{-n-2}\mu_{x_j}^n[0,y_j]+y_j/2\right)\leq\sum_{n\geq0}2^{-n-2}\mu_{x_0}^n[0,y_0]+y_0/2=\mu_{x_0}[0,y_0],$$
$$\liminf_{j\rightarrow\infty}\left(\sum_{n\geq0}2^{-n-2}\mu_{x_j}^n[0,y_j]+y_j/2\right)\geq\sum_{n\geq0}2^{-n-2}\mu_{x_0}^n[0,y_0)+y_0/2=\mu_{x_0}[0,y_0). $$
Hence, if  $\mu_{x}[0,y]$ is discontinuous at some point $(x_0,y_0)$ then $\mu_{x_0}\{y_0\} >0$.
\end{proof}
Following \cite{HrJ}, we consider continuous fiber-preserving  selfmap  $T:\ \mathbb{T}^2\rightarrow\mathbb{T}^2$ defined by  $T(x,y):=(x,\tau_x(y))$, where 
$$\tau_x(y):=\min\{y^\prime\in[0,1]|\mu_x[0,y^\prime]\geq y\}.$$
The existence of this  minimum can be deduced, for example, from Corollary \ref{pi3}.
It is clear that  $\tau_x(0)=0$ and $\tau_x(1)=1$ because, for all small $\epsilon >0$, 
 $$\mu_x[0,1-\epsilon]=1-\mu_{x+\frac12}[0,\epsilon]=1-\sum_{n \geq 0} 2^{-n-2}\mu_{x+\frac12}^n[0,\epsilon]-\epsilon/2 <1.$$ 
Obviously,  $\tau_x(y)$ is an increasing function of $y\in (0,1)$. 
Now, let $\mathrm{Orb}_R(x) = \{R^j(x),\ j \in \Z\}$ [respectively, $\mathrm{Orb}^-_R(x) = \{R^j(x),\ j \leq 0\}$]
denote the full [respectively, backward] orbit of a point $x \in \mathbb{T}^1$. As we have proved, $\mu_x[0,y]$ is continuous at each point 
$(x,y)$ where $x \not\in  \frak{O^-}:= \mathrm{Orb}^-_R(x_1^\ast)\cup\mathrm{Orb}^-_R(x_2^\ast)$.   Therefore 
\begin{equation}\label{dt}
\mu_x[0,\tau_x(y)]= y \ \mbox{if} \ x \not\in \frak{O}^-
\end{equation}
that implies $\tau_x(y_1) < \tau_x(y_2)$ for all $0< y_1<y_2 \leq 1$ and $x \in \T^1\setminus \frak{O}^-$. 

\begin{lemma} \label{lmmContT} The map $T: \mathbb{T}^2\rightarrow\mathbb{T}^2$ is continuous and surjective. Moreover, $T$  commutes with $P$ and is invertible on the set $\mathbb{T}^2\setminus(\frak{O}^-\times\mathbb{T}^1)$. \end{lemma}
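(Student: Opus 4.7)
The plan is to treat $\tau_x$ as the generalized right-inverse of the cumulative distribution function $F_x(y):=\mu_x[0,y]$ and to exploit two structural facts throughout. First, since $\lambda/2\le \mu_x$, the function $F_x$ is strictly increasing in $y$ for every $x$. Second, the atoms of $\mu_x$ sit only at points of the form $\sigma_x^{-n}(y_j^*)$, so they occur precisely when $x\in\mathfrak{O}^-$; in particular, $F_x$ is continuous whenever $x\notin \mathfrak{O}^-$, making it a homeomorphism of $[0,1]$ with $\tau_x=F_x^{-1}$ on those fibers.

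For continuity of $T$, take $(x_j,y_j)\to (x_0,y_0)$ and pass to a subsequence so that $t_j:=\tau_{x_j}(y_j)\to \tilde t$. From $\mu_{x_j}[0,t_j]\ge y_j$ and the upper bound in Corollary \ref{pi3} one gets $\mu_{x_0}[0,\tilde t]\ge y_0$, hence $\tilde t\ge t_0:=\tau_{x_0}(y_0)$. For the reverse inequality, suppose $\tilde t>t_0$ and pick $y'\in(t_0,\tilde t)$ that is not an atom of $\mu_{x_0}$ (available since atoms form a countable set). Strict monotonicity of $F_{x_0}$ yields $\mu_{x_0}[0,y')=\mu_{x_0}[0,y']>\mu_{x_0}[0,t_0]\ge y_0$, and the lower bound in Corollary \ref{pi3} then gives $\liminf_j \mu_{x_j}[0,y']>y_0$, forcing $t_j\le y'$ for large $j$ and contradicting $t_j\to \tilde t>y'$. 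This is the main technical hurdle, and the crucial ingredient is the choice of $y'$ off the countable set of atoms of $\mu_{x_0}$, which matters only when $x_0\in\mathfrak{O}^-$.

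Surjectivity of $T$ and fiberwise invertibility on $\mathbb{T}^2\setminus(\mathfrak{O}^-\times\mathbb{T}^1)$ reduce to properties of $\tau_x$ on individual fibers. Since $F_x$ is strictly increasing, $\tau_x$ has no jumps; combined with $\tau_x(0)=0$ and $\tau_x(1)=1$ (established in the excerpt), the intermediate value theorem gives surjectivity of $\tau_x$ onto $[0,1]$ and hence of $T$. For $x\notin\mathfrak{O}^-$, $\mu_x$ is in addition nonatomic, so $F_x$ is a homeomorphism of $[0,1]$ and $\tau_x=F_x^{-1}$ is bijective; consequently $T$ restricts to a bijection on $\mathbb{T}^2\setminus(\mathfrak{O}^-\times\mathbb{T}^1)$.

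For the commutation with $P$, note that $\mathfrak{O}^-$ is invariant under $x\mapsto x+\tfrac12$, so for $x\notin\mathfrak{O}^-$ both $F_x$ and $F_{x+1/2}$ are homeomorphisms of $[0,1]$ and relation (\ref{MMM}) reads $F_{x+1/2}(z)=1-F_x(1-z)$. Setting $z=1-\tau_x(y)$ gives $F_{x+1/2}(z)=1-y$, so $\tau_{x+1/2}(1-y)=1-\tau_x(y)$; in other words, $T\circ P=P\circ T$ on the dense set $\mathbb{T}^2\setminus(\mathfrak{O}^-\times\mathbb{T}^1)$. The already established continuity of $T$ (and of $P$) then extends this identity to all of $\mathbb{T}^2$.
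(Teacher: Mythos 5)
Your proof is correct and follows essentially the same route as the paper: continuity via Corollary \ref{pi3} and a two-sided squeeze/contradiction on a convergent subsequence of $\tau_{x_j}(y_j)$, surjectivity and fibrewise invertibility from $\tau_x(0)=0$, $\tau_x(1)=1$ together with strict monotonicity of $y\mapsto\mu_x[0,y]$, and commutation with $P$ first on the dense set $\mathbb{T}^2\setminus(\mathfrak{O}^-\times\mathbb{T}^1)$ via relation (\ref{MMM}) and then by continuity. The only cosmetic difference is in the contradiction step of the continuity proof: you pick a non-atom $y'\in(t_0,\tilde t)$ of $\mu_{x_0}$, whereas the paper uses the Lebesgue component of $\mu_{x_0}$ to obtain the quantitative bound $\mu_{x_0}[0,\tau_{x_0}(y_0)+\delta)\ge y_0+\delta/2$ — both ultimately rest on $\mu_x\ge\lambda/2$.
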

\begin{proof} It is clear that $T$ is continuous if and only if  $\tau_x(y)$ is a continuous function  of $x, y$.
So, take some $(x_0, y_0)$ and consider sequences $x_j \to x_0$ and $y_j \to y_0$.  From the definition of $\tau$ it holds $\mu_{x_j}[0,\tau_{x_j}(y_j)]\geq y_j$.
Suppose that $\tau_{x_j}(y_j)$ converges to some limit point $z$. Then Corollary \ref{pi3}  yields 
$\tau_{x_0}(y_0)\leq z = \lim_{j\to\infty}\tau_{x_j}(y_j).$ 
Suppose for a moment that $\tau_{x_0}(y_0) < z$ and take $\delta >0$ such that $\tau_{x_0}(y_0)+\delta < \tau_{x_j}(y_j)$ for all large $j$. 
Then 
$$\mu_{x_0}[0,\tau_{x_0}(y_0)+\delta)=\sum_{n\geq0}2^{-n-2}\mu_{x_0}^n[0,\tau_{x_0}(y_0)+\delta)+(\tau_{x_0}(y_0)+\delta)/2 \geq y_0+\delta/2$$
and therefore,  due to  Corollary \ref{pi3}, we have for all large $j$ that
$$y_j < y_0+0.25\delta < \mu_{x_0}[0,\tau_{x_0}(y_0)+\delta) \leq \liminf_{j\rightarrow\infty}\mu_{x_j}[0, \tau_{x_0}(y_0)+\delta]. $$ 
As a consequence, $\tau_{x_{j}}(y_{j}) \leq \tau_{x_0}(y_0)+\delta$ for all large $j$, a contradiction. Hence, $\tau_{x_0}(y_0)= \lim_{j\to\infty}\tau_{x_j}(y_j)$ and  $\tau_x(y)$ is continuous.  

Now, since $\tau_x(y)$ depends continuously on $x,y$ and $\tau_x(0)=0$ and $\tau_x(1)=1$, we obtain that $\tau_x(\T^1) = \T^1$.  In addition, if $x \not\in\frak{O}^-$ then $\tau_x(y)$ is a strictly increasing function of $y$ and therefore $T$   is invertible on  $\mathbb{T}^2\setminus(\frak{O}^-\times\mathbb{T}^1)$.

Next, since  
$$(T\circ P)(x,y)=\left(x+\frac12,\tau_{x+\frac12}(1-y)\right), \ (P\circ T)(x,y) = \left(x+\frac12,1-\tau_x(y)\right),$$
we find that $T$ commutes with $P$ if and only if
 $$\tau_{x+\frac12}(1-y)=1-\tau_x(y) \ \mbox{for all} \ x,y. $$
Now, for $x\in\mathbb{T}^1\setminus \frak{O}^-$ it holds that
$$1=\mu_x[0,\tau_x(y)]+\mu_{x+\frac12}[0,1-\tau_x(y)]=y+\mu_{x+\frac12}[0,1-\tau_x(y)],$$
so  that
$$\mu_{x+\frac12}[0,1-\tau_x(y)]=1-y=\mu_{x+\frac12}[0,\tau_{x+\frac12}(1-y)].$$
Thus $T$ commutes with $P$ on $\mathbb{T}^2\setminus(\frak{O}^-\times\mathbb{T}^1)$.  Finally, if $(\hat x, \hat y)\not\in \mathbb{T}^2\setminus(\frak{O}^-\times\mathbb{T}^1)$ we can find a sequence of points $(x_j,y_j)\in \mathbb{T}^2\setminus(\frak{O}^-\times\mathbb{T}^1)$ such that $(x_j,y_j) \to (\hat x, \hat y)$ as $j \to +\infty$. But then 
$$T\circ P(\hat x,\hat y)=\lim_{j \to +\infty}T\circ P(x_j,y_j)=\lim_{j \to +\infty}P\circ T(x_j,y_j)=P\circ T(\hat x, \hat y).$$
This completes the proof. 
\end{proof}
We are ready to construct the non-invertible minimal map $\widehat S:\T^2 \to \T^2$.  Defining $\widehat S$ on the set\, $\Lambda:=\mathbb{T}^2\setminus(\frak{O}\times\mathbb{T}^1)$, where $\frak{O}:= \mathrm{Orb}_R(x_1^\ast)\cup\mathrm{Orb}_R(x_2^\ast)$,  by
$$\widehat S|_\Lambda:=T^{-1}|_\Lambda\circ S|_\Lambda\circ T|_\Lambda,$$ we will extend it continuously on the whole torus. 
\begin{lemma} For each $n\in \Z$, the map $\theta_n(x):= \mu_{x}^n[-r(x),0]$ \mbox{is continuous on $\T^1$}. \label{lmmpom}
\end{lemma}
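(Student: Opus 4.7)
Using the defining identity $\mu_x^n = \mu_{R^n(x)}^0 \circ \sigma_x^n$ and the fact that each $\sigma_x^n$ acts on the fiber by an affine translation depending continuously on $x$, I would first rewrite
\[
\theta_n(x) \;=\; \mu_{R^n(x)}^0\bigl[\sigma_x^n(-r(x)),\;\sigma_x^n(0)\bigr].
\]
The two endpoints of the arc, as well as the base point $R^n(x)$, are continuous functions of $x$ (continuity of $R$, $r$, and the partial sums $\varrho_n$). The plan is then to invoke Lemma \ref{22}, which upgrades to joint continuity of $\mu_z^0[a,b]$ in $(z,a,b)$ as long as neither $(z,a)$ nor $(z,b)$ coincides with a singular point $(x_k^\ast, y_k^\ast)$, $k=1,2$. (The upper bound holds unconditionally; the matching lower bound plus the closing of the half-open/closed discrepancy $\mu_{z_0}^0[a_0,b_0)$ versus $\mu_{z_0}^0[a_0,b_0]$ both hold exactly off the singular fibers, where $\mu_z^0$ has no atoms.)

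Everything will then reduce to checking that the two endpoint maps $x\mapsto(R^n(x), \sigma_x^n(0))$ and $x\mapsto(R^n(x), \sigma_x^n(-r(x)))$ never land on $(x_k^\ast, y_k^\ast)$. This is precisely where the two orbit-avoidance conditions on $z_k^\ast$ are used. If $(R^n(x_0), \sigma_{x_0}^n(0)) = (x_k^\ast, y_k^\ast)$, then $S^n(x_0, 0) = z_k^\ast$, hence $S^{-n}z_k^\ast = (x_0, 0) \in \mathbb{T}^1\times\{0\}$, contradicting the assumption that the $S$-orbit of $z_k^\ast$ avoids this curve. Likewise, $(R^n(x_0), \sigma_{x_0}^n(-r(x_0))) = (x_k^\ast, y_k^\ast)$ would give $S^{-n}z_k^\ast = (x_0, -r(x_0))\in\{(x,-r(x))\,|\,x\in\mathbb{T}^1\}$, contradicting the second avoidance condition. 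Both endpoints thus stay in the continuity locus of $\mu_\cdot^0[\cdot,\cdot]$, and continuity of $\theta_n$ on all of $\mathbb{T}^1$ follows.

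I do not expect a genuine obstacle here; the lemma is essentially a bookkeeping consequence of Lemma \ref{22} combined with the defining conditions imposed on $z_k^\ast$. Those conditions were carved out precisely so that the two ``singular'' horizontal curves $\mathbb{T}^1\times\{0\}$ and $\{(x,-r(x))\}$ -- which are exactly the curves of endpoints relevant to $\theta_n$ -- are never brought by an iterate of $S$ into coincidence with a point where $\mu^0$ carries an atom. The only mild care needed is to treat $n\in\Z$ uniformly: since $S$ is a homeomorphism, both forward and backward iterates $S^n(x_0,0)$ and $S^n(x_0,-r(x_0))$ are unambiguously defined, and the argument above covers negative $n$ without change.
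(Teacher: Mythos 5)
Your proposal is correct and follows essentially the same route as the paper's: both rely on the joint continuity of $\mu^0_\cdot[\cdot,\cdot]$ off the singular fibers (Lemma~\ref{22}) and on the orbit-avoidance conditions built into the choice of $z_1^\ast, z_2^\ast$ — namely that the $S$-orbits avoid $\T^1\times\{0\}$ and $\{(x,-r(x))\}$ — to guarantee that neither endpoint of the translated arc ever meets an atom of $\mu^0$. The one detail the paper handles explicitly that you pass over is the sign change of $r$ at $x=0$ and $x=1/2$: under the paper's positive-orientation convention the symbol $[-r(x),0]$ switches between a short and a long arc there, so the paper first notes $\theta_n(0)=\theta_n(1/2)=\theta_n(1)=0$ and then proves continuity separately on $[0,1/2]$ (rewriting $\theta_n(x)=1-\mu_x^n[0,1-r(x)]$) and on $[1/2,1]$, gluing at the common zeros; your version tacitly treats $[-r(x),0]$ as the short arc throughout, which gives the same function but should be stated.
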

\begin{proof} We observe that $\theta_n(0) =\theta_n(1/2) = \theta_n(1)=0$ 
and thus it suffices to establish the continuity 
of $\theta_n$ on the arcs $[0,1/2]$ and $[1/2,1]$ of $\T^1$ separately. For instance, consider the arc $[0,1/2]$ where 
$-r(x)\leq 0$ so that $\theta_n(x) = 1 - \mu^n_x[0, 1-r(x)]$. We recall  that, by our assumptions, none of the points $(R^n(x),\sigma_x^n(1-r(x)))= (R^n(x), \sigma_x^n(-r(x))), \ n \in \Z$, coincided with  $(x_1^\ast, y_1^*)$ and $(x_2^\ast,y_2^*)$ and therefore the function $\mu^n_x[0,y]$ is continuous at each point of the form $(x,y) = (x, 1-r(x))$.  In consequence, 
the map $x  \to \theta_n(x), \ x \in [0,1/2]$ is continuous  as composition of two continuous applications: $x \to (x, 1-r(x)), x \in[0,1/2],$ and $(x,y) \to  1 - \mu^n_x[0, y],$ $(x,y) \in \{(x, 1-r(x)):\ x \in [0,1/2]\}$.  
\end{proof}

Next, set\, $\Lambda_-:=\Lambda\cap\left([0,1/2]\times\mathbb{T}^1\right)$ and\, $\Lambda_+:=\Lambda\cap\left([1/2,1]\times\mathbb{T}^1\right)$.

\begin{lemma} The map $\widehat S$ is  uniformly continuous on $\Lambda_-$. \end{lemma}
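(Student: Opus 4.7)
The plan is to realize uniform continuity as a byproduct of a continuous extension of $\widehat S|_{\Lambda_-}$ to the compact closure $\overline{\Lambda_-}=[0,1/2]\times\mathbb{T}^1$. Since $\Lambda_-$ is dense in $\overline{\Lambda_-}$ (it omits only countably many fibers), any continuous extension on the closure restricts to a uniformly continuous function on $\Lambda_-$, so the whole task is to produce such an extension via a limiting argument.

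The formula to analyze, obtained from $T^{-1}(u,v)=(u,\mu_u[0,v])$ on $\Lambda$, is
\[
\widehat S(x,y)=\bigl(x+\alpha,\,\mu_{x+\alpha}[0,\tau_x(y)+r(x)]\bigr)\qquad((x,y)\in\Lambda).
\]
The first coordinate is continuous on $\mathbb{T}^2$, so only the second component needs attention. Fix $(x_0,y_0)\in\overline{\Lambda_-}$ and let $(x_j,y_j)\to(x_0,y_0)$ in $\Lambda_-$. Continuity of $\tau$ from Lemma \ref{lmmContT} together with continuity of $r$ gives $z_j:=\tau_{x_j}(y_j)+r(x_j)\to z_0:=\tau_{x_0}(y_0)+r(x_0)$ and $x_j+\alpha\to x_0+\alpha$. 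Corollary \ref{pi3} then sandwiches
\[
\mu_{x_0+\alpha}[0,z_0)\le\liminf_{j\to\infty}\mu_{x_j+\alpha}[0,z_j]\le\limsup_{j\to\infty}\mu_{x_j+\alpha}[0,z_j]\le\mu_{x_0+\alpha}[0,z_0],
\]
so the sequence $\widehat S(x_j,y_j)$ converges to a limit depending only on $(x_0,y_0)$ as soon as $\mu_{x_0+\alpha}$ has no atom at $z_0$.

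The remaining work is to handle atoms. By definition of $\mu$, the measure $\mu_u$ has an atom at $v$ precisely when $S^n(u,v)=z_k^*$ for some $n\ge 0$, $k\in\{1,2\}$. Writing $z_0=\sigma_{x_0}(\tau_{x_0}(y_0))$ and using the cocycle identity $\varrho_n(x+\alpha)=\varrho_{n+1}(x)-r(x)$, one checks that $\mu_{x_0+\alpha}$ carries an atom at $z_0$ if and only if $(x_0,\tau_{x_0}(y_0))$ belongs to the strict $S$-backward orbit of some $z_k^*$. This forces $x_0\in\frak{O}^-\setminus\{x_1^*,x_2^*\}$ (a countable set) and further requires $y_0$ to lie in the plateau interval of $\tau_{x_0}$ at the corresponding atom of $\mu_{x_0}$. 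At every other point of $\overline{\Lambda_-}$ the sandwich above already supplies convergence and hence a continuous extension. At the exceptional points one must show the limit still exists and is sequence-independent; this is done using the sign condition $r(x)\ge 0$ for $x\in[0,1/2]$ together with Lemma \ref{lmmpom} and the transversal intersection of $\phi$ and $\psi$ at $z_k^*$, which jointly control the way the shrinking supports of the approximants $\mu_{x_j+\alpha}^{n_0}$ collapse onto the atom of $\mu_{x_0+\alpha}^{n_0}$ relative to $z_j$. This atomic analysis is the central technical obstacle; once completed, continuous extendibility to $\overline{\Lambda_-}$ is established and uniform continuity of $\widehat S|_{\Lambda_-}$ follows from compactness.
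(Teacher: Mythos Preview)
Your overall strategy---extend $\widehat S|_{\Lambda_-}$ continuously to the compact closure and deduce uniform continuity---is sound, and your identification of the atomic points as the sole obstacle is correct. But the proof stops exactly where the real work lies: the sentence ``this atomic analysis is the central technical obstacle; once completed\ldots'' is an admission that the key step has not been carried out, and the ingredients you invoke (sign of $r$, Lemma~\ref{lmmpom}, transversality of $\phi,\psi$) do not by themselves settle it.

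Here is the difficulty. At an atom, say $x_0=R^{-m}(x_k^*)$ with $n_0=m-1$, the approximating measures $\mu_{x_j+\alpha}^{n_0}$ are uniform on arcs shrinking to the atom while $z_j$ approaches the atom. The limit of $\mu_{x_j+\alpha}^{n_0}[0,z_j]$ depends on the \emph{relative rate} at which $z_j$ and the shrinking support approach the atom; transversality of $\phi,\psi$ at $z_k^*$ only governs the rate of shrinking of the support and says nothing about $z_j=\tau_{x_j}(y_j)+r(x_j)$, which is determined implicitly by $y_j$. What actually pins down $z_j$ is the identity $y_j=\mu_{x_j}[0,\tau_{x_j}(y_j)]$, valid because $x_j\notin\frak O^-$. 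This is the mechanism you are missing: it expresses $y_j$ as a weighted sum containing the \emph{same} problematic term $\mu_{x_j}^{m}[0,\tau_{x_j}(y_j)]$ (which equals $\mu_{x_j+\alpha}^{n_0}[0,z_j]$ up to the continuous correction supplied by Lemma~\ref{lmmpom}), plus terms that are continuous at $(x_0,y_0)$ and a uniformly small tail. Since $y_j\to y_0$, the identity forces the problematic term to converge. The paper runs exactly this idea, packaged as a direct $\varepsilon$--$\delta$ argument: setting $s_n:=\mu_{x_1}^n[0,\tau_{x_1}(y_1)]-\mu_{x_2}^n[0,\tau_{x_2}(y_2)]$, it uses
\[
d(y_1,y_2)=d\Bigl(\sum_{n\ge 0} 2^{-n-2}s_n+(\tau_{x_1}(y_1)-\tau_{x_2}(y_2))/2,\;0\Bigr)
\]
to bound the single bad increment $s_{n_0}$ by the good ones, and then feeds that bound into the analogous expression for the second coordinate of $\widehat S$. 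Without this cancellation device your argument remains incomplete.
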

\begin{proof}
It follows from (\ref{dt}) that  
$\tau_x^{-1}(y)=\mu_x[0,y]$ for each pair $(x,y) \in\Lambda. $ Hence, 
$$\widehat S(x,y)=(T^{-1}\circ S\circ T)(x,y)=(R(x),\mu_{R(x)}[0,\sigma_x(\tau_x(y))]), \ (x,y) \in\Lambda.$$
Since $R:\T^1 \to \T^1$  is continuous, we need only to prove the uniform continuity of 
$M(x,y):=\mu_{R(x)}[0,\sigma_x(\tau_x(y))]$ on\, $\Lambda_-$. This task can be simplified if we observe  that, due to the Weierstrass M-test and  Lemmas \ref{lmmContT} and \ref{lmmpom}, the function 
$W(x,y):= \sum_{n\geq 0} 2^{-n-2}\mu^{n+1}_{x}[-r(x),0]+  \sigma_x(\tau_x(y))/2$ is continuous on $\T^2$ while $$
M(x,y)= \sum_{n\geq 0} \frac{\mu^n_{R(x)}[0,\sigma_x(\tau_x(y))]}{2^{n+2}} +  \sigma_x(\tau_x(y))/2 = \sum_{n\geq 0} \frac{\mu^{n+1}_{x}[0,\tau_x(y)]}{2^{n+2}} +W(x,y). 
$$
Here we are using the  relation 
\begin{multline} \nonumber \mu^n_{R(x)}[0,\sigma_x(\tau_x(y))] =\mu_{R^{n}(R(x))}^0\circ\sigma_{R^{n-1}(R(x))}\circ\ldots\circ\sigma_{R(R(x))}\circ\sigma_{R(x)}[0,\sigma_x(\tau_x(y))]=\\
{}=\mu_{R^{n+1}(x)}^0\circ\sigma_{R^n(x)}\circ\ldots\circ\sigma_{R(x)}\circ\sigma_x[\sigma_x^{-1}(0),\tau_x(y)]=\\
=\mu_x^{n+1}[-r(x),\tau_x(y)]=\mu_x^{n+1}[-r(x),0]+\mu_x^{n+1}[0,\tau_x(y)].\end{multline}
Recall also that $\sigma_x(y)=y+r(x), \ \mu_x^{n}\{0\}=0, \ n \in \N$,  and  that $0\in[-r(x),\tau_x(y)]$ because of non-negativity of  $r(x)$ for $x\in[0,1/2]$. 

In consequence, it suffices to establish the uniform continuity of the function 
$A(x,y):=\sum_{n\geq 0} 2^{-n-2}\mu^{n+1}_{x}[0,\tau_x(y)] $ on\, $\Lambda_-$. In other words, we have to prove that 
for each $\epsilon>0$ there exists $\delta>0$ such that for any $x_1,x_2\in[0,1/2]\setminus\frak{O}$ and $y_1,y_2\in\mathbb{T}^1$ satisfying
$d(x_1,x_2)<\delta \mbox{ and } d(y_1,y_2)<\delta$
it holds that
$\frak{a}:= d(A(x_1,y_1), A(x_2,y_2)) <\epsilon.$ Here $d$ denotes the metric on $\T^1$ naturally inherited from $\R$. In particular, $d$ is shift-invariant. 
Observe also that we interpret 
$A(x,y)$ as a point on $\T^1$.  
Set $s_n:=\mu_{x_1}^n[0,\tau_{x_1}(y_1)]-\mu_{x_2}^n[0,\tau_{x_2}(y_2)].$ Since
 $y_k= \mu_{x_k}[0,\tau_{x_k}(y_k)]$, we find that 
$$
\frak{b}:=d(y_1,y_2)=d\left(\sum_{n\geq 0} \frac{s_{n}}{2^{n+2}} + (\tau_{x_1}(y_1)-\tau_{x_2}(y_2))/2,\ 0\right),
\frak{a}=d\left(\sum_{n\geq 0} \frac{s_{n+1}}{2^{n+2}},\ 0\right).
$$
So let us take arbitrary $\epsilon>0$, then there exists $N_1$ for which
$$ \sum_{n\geq N_1} {2^{-n-2}}<\frac\epsilon{16}, \ \ \mbox{so that } \ \sum_{n\geq N_1}d({2^{-n-2}}s_{n+1}, 0)<\frac\epsilon{16}.$$
Let  $\mathcal{U}^n_\kappa$ denote  open $\kappa-$ neighbourhood of the set $\{R^{-n}(x^*_1), R^{-n}(x^*_2)\}$.   Due to  Corollary \ref{pi3} and Lemma \ref{lmmContT}, the function \mbox{$\mu_{x}^n[0,\tau_{x}(y)]: (\T^1\setminus \{\mathcal{U}^n_\kappa\})\times \T^1  \to \T^1$}  is uniformly continuous for each $n$.  We can choose $\kappa$ to be small enough in order 
to have the closures of $N_1+1$  sets $\mathcal{U}^n_{2\kappa}, \ n =0,1, \dots, N_1,$ mutually disjoint.  Obviously, each function from the finite set $\{\tau_{x}(y)$, $\mu^{n}_{x}[0,\tau_x(y)], n =0,1, \dots, N_1\}$,  is uniformly continuous on 
$(\T^1\setminus \{\cup_{n=0}^{N_1}\mathcal{U}^n_{\kappa}\})\times \T^1$.  

Take now $\delta\in (0, \min\{\epsilon/16, \kappa\})$  small enough to assure
 that $d(x_1,x_2)<\delta,$  $d(y_1,y_2)<\delta$ imply the inequality $$
d(\tau_{x_2}(y_2),\tau_{x_1}(y_1)) < \epsilon/8
$$
as well as 

(i) the existence of at most one integer $n_0\in [0, N_1]$ such that $\{x_1, x_2\} \cap \mathcal{U}^{n_0}_{2\kappa}\not=\emptyset$;

(ii) $d({2^{-n-2}}s_n,0) < \epsilon/(16(N_1+1))$ once $\{x_1, x_2\} \cap \cup_{n=0}^{N_1}\mathcal{U}^n_{\kappa}=\emptyset$, $n=0,1,\dots, N_1$. 

A key observation is that  in the case (i) the distance $d({2^{-n_0-2}}s_{n_0},0)$ cannot be large even when $\{x_1, x_2\} \cap \cup_{n=0}^{N_1}\mathcal{U}^n_{\kappa}\not=\emptyset$: 
\begin{multline}\nonumber d({2^{-n_0-2}}s_{n_0},0)\leq
\frak{b} + d\left(\sum_{n=0, n \neq n_0}^{ N_1}{2^{-n-2}}s_{n},0\right) + d\left(\sum_{n>N_1} {2^{-n-2}}s_n,0\right) +{}\\
{}+d(\tau_{x_1}(y_1)-\tau_{x_2}(y_2),0)/2
< \frac{\epsilon}{16} + (N_1+1)\frac\epsilon{16(N_1+1)}+\frac\epsilon{16}+\frac{\epsilon}{16}=\frac\epsilon{4}.\end{multline}
Thus, estimating separately the term  $d({2^{-n_0-1}}s_{n_0},0)$ (whenever $n_0$ with properties described in (i) appears) as $d({2^{-n_0-1}}s_{n_0},0) <\epsilon/2$, we obtain 
$$ \frak{a} = d\left(\sum_{n=0}^\infty \frac{s_{n+1}}{2^{n+2}},\ 0\right) \leq  \frac\epsilon{2} + d\left(\sum_{n < N_1, n\neq n_0}\frac{s_{n+1}}{2^{n+2}},0\right)
+d\left(\sum_{n\geq N_1}\frac{s_{n+1}}{2^{n+2}},0\right)< 
$$ 
$
\epsilon/2+ 2N_1\epsilon/(16(N_1+1))+\epsilon/16< \epsilon$, which completes the proof. 
\end{proof}
\begin{corollary} The map $\widehat S|_{\Lambda}$  commutes with $P$,  is  uniformly continuous on $\Lambda$ and it admits a unique continuous extension $\widehat S$ on $\T^2$ which also commutes with $P$. 
\end{corollary}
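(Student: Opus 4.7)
The plan is to establish the three assertions in turn.

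\emph{Commutation on $\Lambda$.} The map $S$ commutes with $P$ by the choice of $r$, and $T$ commutes with $P$ by Lemma \ref{lmmContT}. Since $P$ permutes the marked points $z_1^\ast$ and $z_2^\ast$ and commutes with $R$, the set $\frak{O}$ is $P$-invariant, hence $P(\Lambda)=\Lambda$ and $T$ restricts to a homeomorphism of $\Lambda$ onto itself. The relation $T\circ P=P\circ T$ on $\Lambda$ then inverts to $T^{-1}\circ P=P\circ T^{-1}$ on $\Lambda$, so that $\widehat{S}|_\Lambda=T^{-1}\circ S\circ T|_\Lambda$ commutes with $P$.

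\emph{Uniform continuity on $\Lambda$.} The preceding lemma supplies a modulus of continuity on $\Lambda_-$. Because $P$ is an isometry of $\T^2$ sending $\Lambda_-$ onto $\Lambda_+$, and $\widehat{S}$ commutes with $P$ on $\Lambda$, the same modulus works on $\Lambda_+$. To glue these into a single modulus on $\Lambda$, I would fix $\epsilon>0$ and pick $\delta>0$ smaller than the moduli on $\Lambda_\pm$ corresponding to $\epsilon/2$. For $z_1,z_2\in\Lambda$ with $d(z_1,z_2)<\delta$, either both points lie in one of $\Lambda_\pm$ and we are done, or else, writing $z_i=(x_i,y_i)$, the smallness of the circle distance between $x_1$ and $x_2$ forces both coordinates to be close either to $1/2$ or to $0\equiv 1$. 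Since $x_1^\ast\in(0.1,0.2)\cap\Q$ and $\alpha$ is irrational, neither $0$ nor $1/2$ lies in $\frak{O}$, so the auxiliary point $z_\ast=(1/2,y_1)$ or $z_\ast=(0,y_1)$ belongs to $\Lambda_-\cap\Lambda_+$ and is within $2\delta$ of each $z_i$; two triangle inequalities then yield $d(\widehat{S}z_1,\widehat{S}z_2)<\epsilon$.

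\emph{Continuous extension.} Because $\frak{O}$ is countable, $\Lambda$ is dense in the compact metric space $\T^2$. The standard extension theorem for uniformly continuous maps on dense subsets of complete metric spaces then yields a unique continuous $\widehat{S}:\T^2\to\T^2$ prolonging $\widehat{S}|_\Lambda$. The continuous maps $\widehat{S}\circ P$ and $P\circ\widehat{S}$ agree on the dense set $\Lambda$ and hence everywhere on $\T^2$.

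The only step that is not purely routine is the cross-case in the gluing argument for uniform continuity: a pair of nearby points in $\Lambda$ may straddle one of the cutting curves $\{0\}\times\T^1$ or $\{1/2\}\times\T^1$, so the moduli on $\Lambda_\pm$ do not apply to the pair directly. The resolution rests on the observation that these very curves lie in $\Lambda_-\cap\Lambda_+$, providing the bridging points through which the two moduli can be combined.
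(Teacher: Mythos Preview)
Your argument is correct and parallels the paper's proof closely. The only cosmetic difference is in the extension step: whereas you first glue the moduli on $\Lambda_\pm$ into a global modulus on $\Lambda$ via a bridging point on $\{0,1/2\}\times\T^1$ and then extend once, the paper extends $\widehat S|_{\Lambda_-}$ and $\widehat S|_{\Lambda_+}$ separately to the closed half-tori $[0,1/2]\times\T^1$ and $[1/2,1]\times\T^1$ and then observes that the two extensions agree on the overlap $\{0,1/2\}\times\T^1\subset\Lambda_-\cap\Lambda_+$---both routes resting on the same observation that $0,1/2\notin\frak O$.
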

\begin{proof} Observe that all maps $T, P, S, \widehat S: \Lambda \to \Lambda$\, are bijective and $P({\Lambda_+}) =  ({\Lambda_-})$. First, we will prove that  $\widehat S|_\Lambda$ commutes with $P$.  Clearly, 
$$(S\circ T\circ P)(z) = (S\circ P\circ T)(z) =  (P\circ S\circ T)(z), \ z \in \T^2, $$
so that, for $z \in \Lambda$, 
$$(T\circ \widehat S\circ P)(z) = (S\circ T\circ P)(z) = (P\circ S\circ T)(z) = (P\circ T\circ \widehat S)(z) = (T\circ P\circ \widehat S)(z).$$
But $T$ is injective on\, $\Lambda$ \, and therefore 
$(P\circ \widehat S)(z)=(\widehat S\circ P)(z)$ for each $z \in \Lambda$. Hence, since $P, P^{-1}$ are linear maps, 
$ \widehat S|_{\Lambda_+} = P^{-1}\circ \widehat S|_{\Lambda_-}\circ P|_{\Lambda_+}$ is also uniformly continuous.
As the maps $\widehat S|_{\Lambda^-}$ and $\widehat S|_{\Lambda^+}$ are uniformly continuous and\, $\Lambda^-$ is dense in $[0,1/2]\times\mathbb{T}^1$ and $\Lambda^+$ is dense in $[1/2,0]\times\mathbb{T}^1$, they can be uniquely continuously extended to the sets $[0,1/2]\times\mathbb{T}^1$ or $[1/2,0]\times\mathbb{T}^1$, respectively. Since these maps coincide on the intersection $\{0, 1/2\} \times \T^1 = ([0,1/2]\times\mathbb{T}^1) \cap ([1/2,0]\times\mathbb{T}^1)$,
 they define a continuous self-map $\widehat S:\T^2\to \T^2$. Clearly, since $\widehat S|_{\Lambda}$  commutes with $P$, the set\, $\Lambda$\, is dense in $\T^2$ and the functions $\widehat S, P$ are continuous on $T^2$, we obtain that  $(P\circ \widehat S)(z)=(\widehat S\circ P)(z)$ for all $z \in \T^2$.  Similarly, $T\circ \widehat S=S\circ T$ on $\T^2$.
\end{proof}
\begin{lemma} The map $\widehat S:\T^2 \to \T^2$ is minimal and non-invertible.
\end{lemma}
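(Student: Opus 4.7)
The bridge between $S$ and $\widehat S$ is the semiconjugacy $T\circ\widehat S = S\circ T$ on all of $\T^2$, already established in the preceding corollary. I will use it twice: to transfer minimality from Parry's map and, separately, to exhibit an arc on which $\widehat S$ is constant. The genuinely delicate work (continuity of $\widehat S$, its commutation with $P$, and the controlled collapsing behaviour of $T$) was handled in the previous lemmas, so what follows is essentially bookkeeping with the factor map $T$; the only points that still need checking are that a certain atom of $\mu_{x_1^\ast}$ survives in the convex combination and that a certain target fibre of $T$ is a singleton, both of which reduce to the irrationality of $\alpha$.

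\emph{Minimality.} Take a non-empty closed $\widehat S$-invariant set $Y\subseteq\T^2$; I claim $Y=\T^2$. Continuity of $T$ and the semiconjugacy yield that $T(Y)$ is closed, non-empty and $S$-invariant, so minimality of $S$ forces $T(Y)=\T^2$. For any $x\in\T^1\setminus\frak{O}^-$, equation (\ref{dt}) together with Lemma \ref{lmmContT} give that $\tau_x$ is a continuous, strictly increasing self-homeomorphism of $\T^1$ fixing $0$ and $1$; hence $T$ is bijective on $\{x\}\times\T^1$, and together with $T(Y)=\T^2$ this forces $\{x\}\times\T^1\subseteq Y$ for every such $x$. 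Since $\frak{O}^-$ is countable, the union of these fibres is dense in $\T^2$, and closedness of $Y$ yields $Y=\T^2$.

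\emph{Non-invertibility.} The summand $\mu_{x_1^\ast}^0=\delta_{y_1^\ast}$ contributes an atom of mass $1/4$ at $y_1^\ast$ to $\mu_{x_1^\ast}$; for $n\geq 1$ one has $R^n(x_1^\ast)\notin\{x_1^\ast,x_2^\ast\}$ (otherwise $n\alpha\equiv 0$ or $1/2\pmod 1$, contradicting the irrationality of $\alpha$), so the remaining summands $\lambda$ and $\mu_{x_1^\ast}^n$ with $n\geq 1$ are atomless and therefore $\mu_{x_1^\ast}\{y_1^\ast\}=1/4$. By the definition of $\tau$, this forces $\tau_{x_1^\ast}$ to be constantly equal to $y_1^\ast$ on an arc $I\subseteq\T^1$ of length $1/4$, and $T$ collapses $\{x_1^\ast\}\times I$ onto the single point $(x_1^\ast,y_1^\ast)$. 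For every $p\in\{x_1^\ast\}\times I$ the semiconjugacy gives
$$T(\widehat S(p))=S(x_1^\ast,y_1^\ast)=\bigl(R(x_1^\ast),\,y_1^\ast+r(x_1^\ast)\bigr),$$
and the same irrationality argument yields $R(x_1^\ast)\notin\frak{O}^-$, so $T$ is bijective on the fibre over $R(x_1^\ast)$. Therefore $\widehat S(p)$ equals the unique $T$-preimage of this point for every $p\in\{x_1^\ast\}\times I$, and $\widehat S$ collapses the nondegenerate arc $\{x_1^\ast\}\times I$ to one point, precluding invertibility.
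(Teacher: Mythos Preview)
Your proof is correct and follows essentially the same route as the paper's: transfer minimality via the semiconjugacy $T\circ\widehat S=S\circ T$ together with the fibrewise injectivity of $T$ off $\frak{O}^-$, and establish non-invertibility by showing that the atom $\mu_{x_1^\ast}\{y_1^\ast\}=1/4$ forces $\tau_{x_1^\ast}$ to be constant on an arc, which $\widehat S$ then collapses because $T$ is injective on the fibre over $R(x_1^\ast)$. If anything, you are a touch more explicit than the paper in justifying via the irrationality of $\alpha$ both that the summands $\mu_{x_1^\ast}^n$ with $n\ge 1$ are atomless and that $R(x_1^\ast)\notin\frak{O}^-$.
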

\begin{proof}    
Let $F\subset\mathbb{T}^2$ be a non-empty compact and $\widehat S$-invariant set. But then
$$S\circ T(F)=T\circ \widehat S(F)\subseteq T(F).$$
So, $T(F)$ is also compact $S$-invariant set, which means $T(F)=\mathbb{T}^2$ because $S$ is minimal. But since fibers are mapped to fibers and $T$ is bijective on\, $\Lambda$, the set $F$ must contain whole $\Lambda$. But then $\bar F\supseteq\bar\Lambda=\mathbb{T}^2$ and therefore  $\widehat S$ is a minimal map.

Finally, we prove that $\widehat S$ is non-invertibile. For $\delta \in (0, 0.25)$, consider the points  $y_1=\mu_{x_1^\ast}[0,y_1^\ast] > y_2=\mu_{x_1^\ast}[0,y_1^\ast]-\delta$ on the circle $\mathbb{T}^1$.  For every $y' < y_1^*$, we have that 
\begin{multline}\nonumber \mu_{x_1^\ast}[0,y^\prime]=\sum_{n=0}^\infty \frac{\mu_{R^n(x_1^\ast)}^0\sigma_x^n[0,y^\prime]}{2^{n+2}}+y^\prime/2=\sum_{n=1}^\infty \frac{\mu_{R^n(x_1^\ast)}^0\sigma_x^n[0,y^\prime]}{2^{n+2}}+y^\prime/2<\\
<\sum_{n=1}^\infty \frac{\mu_{R^n(x_1^\ast)}^0\sigma_x^n[0,y^*_1]}{2^{n+2}}+y^*_1/2+0.25-\delta= \sum_{n=0}^\infty  \frac{\mu_{R^n(x_1^\ast)}^0\sigma_x^n[0,y^*_1]}{2^{n+2}}+y_1^\ast/2-\delta= y_2.\end{multline}
This yields immediately that $\tau_{x_1^\ast}(y_1)=y_1^\ast=\tau_{x_1^\ast}(y_2)$
and therefore 
$$T\circ \widehat S(x_1^\ast,y_1)=  S(x_1^\ast,y_1^*) = (R(x_1^*),\sigma_{x_1^*}(y_1^*))= T\circ \widehat S(x_1^\ast,y_2). $$ Since $T$ is invertible on the fiber over $\{R(x_1^*)\}$, we find that  $\widehat S(x_1^\ast,y_1)=   \widehat S(x_1^\ast,y_2)$  and therefore the non-degenerated interval $\{x_1^*\}\times [y_1-0.25,y_1]$ is transformed by 
$\widehat S$ into a point.  \end{proof}

\begin{proof}[Proof of Theorem \ref{main}] So far we have obtained a non-invertible minimal self-map $\widehat S$ of the torus. Since $\widehat S$  commutes with $P$,  it induces a transformation $\widetilde S$ of the Klein bottle. $\widetilde S$ is a factor of $\widehat S$ by the fiber-preserving quotient map $\pi$, so $\widetilde S$ is a non-invertible minimal fiber-preserving transformation $\widehat S$ of the Klein bottle. The construction is completed.
\end{proof}
\begin{remark} After changing the definition of $\mu_x$ (where an appropriate weighted sum of {\it all measures} $\mu^n_x, \ n \in \Z,$ should be considered),  we  can  similarly construct a  minimal circle-fibered homeomorphism of the Klein bottle having an asymptotic pair of points.  Then the Roberts-Steenrod theory  \cite{RS} of monotone transformations of 2-dimensional manifolds can applied in order to obtain a different proof of 
Theorem \ref{main}, see \cite{KST} for more detail. 
\end{remark}

\section{Acknowledgements}
\noindent The authors  thank  {\mL}ubom\'ir Snoha and  Roman Hric  for valuable conversations.  
The first author was supported by Project SGS 2/2013 from the Silesian University in Opava. Support of this institution is gratefully acknowledged. The second author was partially supported  by FONDECYT (Chile), project 1110309.  This work was done during the second author sabbatical leave from the University of Talca and his research stay  at the Mathematical Institute of the Silesian University in Opava.  Sergei Trofimchuk  acknowledges the hospitality of this Mathematical Institute.

\bibliographystyle{amsplain}

\end{document}